\documentclass{article}
\usepackage{graphicx} % Required for inserting images
\usepackage{url}
\usepackage{amsmath,amsthm,amsfonts,amssymb}
\usepackage[colorlinks=true,citecolor=black,linkcolor=black,urlcolor=blue]{hyperref}
\usepackage{cleveref}
\usepackage{float}
\usepackage{enumitem}
\usepackage{todonotes}
\usepackage{dsfont}
\usepackage{amstext}
\usepackage{algorithm}
\usepackage{algpseudocode}

\newtheorem{theorem}{Theorem}

\newtheorem{lemma}[theorem]{Lemma}

\newtheorem{question}[theorem]{Question}

\DeclareMathOperator{\rad}{rad}

\def\paren#1{\left( #1 \right)}
\def\acc#1{\left\{ #1 \right\}}

\usepackage{xcolor}

\renewcommand{\le}{\leqslant}
\renewcommand{\ge}{\geqslant}

\title{On odd perfect numbers with exactly one even exponent greater than 2}
\author{Pascal Ochem and Joshua Zelinsky}
\date{2026}

\begin{document}

\maketitle
\begin{abstract}
We show that if all the even exponents of an odd perfect number $N$ are equal to 2 except for one, then $3^{23,000,000,000}$ divides $N$.
% An odd perfect number $N$ is of the form $N=p^e\cdot m^2$ where $p$ is the \emph{special prime}.
% A component of $N$ is \emph{big} if it is non-special and its (even) exponent is at least 4.
% It is known that $m$ cannot be square-free, i.e., $N$ has at least one big component.
% We say that $N$ is \emph{restrained} if it has exactly one big component.
% We show that if $N$ is restrained, then the big exponent is at least 100.
\end{abstract}

\section{Introduction}
In this paper, we consider that $N$ is an odd perfect number, that is,
$N$ is odd and $\sigma(N)=2N$, where $\sigma$ is the positive-divisor-sum function.
Euler proved that $N=p^e\cdot m^2$ where $p$ is prime,
$p\equiv e\equiv 1\pmod{4}$ and $p$ does not divide $m$.
We say that $p$, $e$, and $p^e$ are respectively the special prime,
the special exponent, and the special component.
McDaniel~\cite{McDaniel:1970} has shown that $m$ cannot be square-free,
i.e., at least one of the even exponents of $N$ is at least 4.
We say that $N$ is \emph{restrained} if $N$ has exactly one such exponent.
If $N$ is restrained, let $n^q$ denote the unique component such that $n\ne p$ and $q\ge4$.
We say that $n$, $q$, and $n^q$ are respectively the notable prime,
the notable exponent, and the notable component.
Finally, we say that a prime or a component of $N$ is \emph{ordinary} if its exponent is 2.
Thus, the only non-ordinary primes are $p$ and $n$.

Our aim is to get a lower bound on $q$.

\begin{theorem}\label{main}
If $N$ is a restrained odd perfect number, then $n=3$ and $q\ge23,000,000,000$.
\end{theorem}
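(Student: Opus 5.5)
The plan is the standard ``restricted exponent'' technique for odd perfect numbers, organised around the factorisation of $\sigma(q^2)=q^2+q+1$ for ordinary primes. Write $N=p^e\,n^q\prod_i r_i^2$, where the $r_i$ are the ordinary primes. For each ordinary prime $r$ we have $\sigma(r^2)=r^2+r+1$. Its prime factors are $3$, which occurs exactly to the first power and only when $r\equiv1\pmod 3$, and primes $\ell\equiv1\pmod 3$. Every such factor must divide $2N$.

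Since $\sigma(r^2)$ is odd, the single factor $2$ in $\sigma(N)=2N$ comes from $\sigma(p^e)$. This gives $\sigma(p^e)=2\cdot(\text{odd})$, and $\sigma(n^q)$ is odd.

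The first step is to show $n=3$ by exponent counting on $3$. Suppose $3$ is ordinary or special, and let $k$ be the number of ordinary $r\equiv1\pmod 3$. Then $v_3(\sigma(N))$ is at least $k$, plus contributions from $\sigma(p^e)$ and $\sigma(n^q)$. Meanwhile $v_3(N)\le 2$, or equals $e$ if $3=p$; the case $3=p$ is impossible since $p\equiv1\pmod4$.

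Next I would count primes $\equiv1\pmod 3$ against the multiplicative structure. Every prime factor of $\prod_i\sigma(r_i^2)$ other than $3$ is $\equiv1\pmod 3$ and appears in $N$ with exponent $2$, or is $p$ or $n$. So the ordinary primes $\equiv 1\pmod 3$ must absorb essentially all of them, squared. Comparing the total number of prime factors (with multiplicity) produced by the $\sigma(r^2)$ with the available slots ($2$ per ordinary prime) gives a counting inequality, in the spirit of McDaniel's argument for $m$ squarefree. This inequality can only be balanced if a large power of $3$ is absorbed by a non-ordinary component. That component must be $n^q$ with $n=3$, so $3^q\,\|\,N$ with $q$ huge.

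The second step turns this into the explicit bound. With $n=3$, we get $q=v_3(N)=v_3(\sigma(N))\ge k$, where $k$ is the number of ordinary primes $\equiv1\pmod3$. So it suffices to show $N$ has at least $23\cdot10^9$ ordinary primes $r\equiv1\pmod3$, up to small corrections from $\sigma(p^e)$ and $\sigma(3^q)$.

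To get this I would set up a weighted count, or ``graph'' argument. Consider the directed structure $r\to\ell$ for primes $\ell\mid r^2+r+1$. Each ordinary prime has in-degree at most $2$, the $\ell$ counted with multiplicity. Using $r^2+r+1>r^2$, the product of the $\sigma(r^2)$ is larger than the product of the $r^2$. This forces ``outflow'' into $3$, $p$, or $3$'s contribution $\sigma(3^q)=(3^{q+1}-1)/2$. The abundancy constraint $\sigma(N)/N=2$ then shows that $\prod(1+1/r+1/r^2)$ must nearly compensate. Together with the Mertens-type requirement that ordinary primes be numerous and large, this drives $k$ up.

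Concretely, I would compute lower bounds on $\prod_{r}(1+r^{-1}+r^{-2})$ needed to reach $2/\bigl(\sigma(p^e)/p^e\cdot\sigma(3^q)/3^q\bigr)\approx 2/(1.5\cdot\text{small})$. Combined with the constraint that the $r$ are congruent to $1 \bmod 3$ (in-degree considerations), this requires a huge number of such primes. The count would be done by computer, via the fact that $\sum 1/r$ over primes $\equiv1\pmod3$ grows like $\tfrac12\log\log x$.

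The main obstacle is making this counting rigorous and explicit. It requires handling the primes dividing $\sigma(3^q)$ and $\sigma(p^e)$, which can feed ordinary primes not $\equiv1\pmod 3$. It also requires carrying out the large computation that yields $23{,}000{,}000{,}000$. I would first try a linear-programming-style weight argument over the tree of forced factors, then verify the numerical bound computationally.
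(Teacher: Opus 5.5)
Your first step is where the proposal fails. No count of copies of $3$, or of primes $\equiv1\pmod3$, forces $n=3$, and the claim that ``this inequality can only be balanced if a large power of $3$ is absorbed'' has nothing behind it. Suppose $3\nmid N$ or $3^2\parallel N$. Then at most two ordinary primes $x_1,x_2$ are $\equiv1\pmod3$, and this is consistent with every multiplicity count. An ordinary $r\equiv2\pmod3$ only needs $\rad(\sigma(r^2))\mid p\,n\,x_1x_2$. Since $p$ and $n$ can both be $\equiv1\pmod3$ with unbounded exponents, they can absorb solutions of $r^2+r+1=p^an^b$ for arbitrarily many pairs $(a,b)$, one ordinary prime per pair. (Counting does have teeth when every non-special exponent is $2$, because then only $p$ absorbs without bound and Lemma~\ref{rtt} leaves just $\sigma(r^2)=p$. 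With two unbounded exponents it has none.)

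The paper rules out these cases by abundancy, not counting. For $3\nmid N$ it shows that all ordinary primes are $\equiv5\pmod6$ and regular, and that each pair $(a,b)$ gives at most one $s$ with $\sigma(s^2)=p^an^b$. It discards pairs via Lemma~\ref{rtt} and the non-squareness of $s^2+s+1$. It then bounds $\prod s/(s-1)$ using computed minimal solutions below $10^9$ plus the tail $A_\infty$, giving $\sigma_{-1}(N)<1.69$. For $3^2\parallel N$ it needs a long factor-tree elimination of $5,11,67,107,653,127,331,19$, most primes below $1000$, and $31$. This uses Lemmas~\ref{p1mod3}, \ref{NotFromNotable}, \ref{Robin} and Theorem~\ref{Pace}, and ends with an abundancy bound below $2$. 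None of this can be recovered from your sketch.

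Your second step starts correctly: $q\ge|O_1|$, where $O_1$ is the set of ordinary primes $\equiv1\pmod3$. But an abundancy argument only lower-bounds the total number of ordinary primes, and these need not be $\equiv1\pmod3$, contrary to your ``constraint that the $r$ are congruent to $1 \bmod 3$''. Transferring the bound requires Lemma~\ref{omegan}, i.e.\ $|O_2|\le|O_1|+2$, which rests on real Diophantine input:
Lemma~\ref{rtt}, so that $\sigma(s^2)=p^a$ forces $a=1$; Bugeaud--Mihailescu, so that $x^2+x+1=3y^t$ has no solution with $t\ge3$; and the Ochem--Rao lemma, which excludes $\sigma(s^2)=p$ and $\sigma(s'^2)=3p$ simultaneously. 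Without these, the ordinary primes whose $\sigma(s^2)$ misses $O_1$ are not bounded in number, and your in-degree count proves nothing.

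Finally, a Mertens-type density estimate cannot give $2.3\cdot10^{10}$. Even with all primes below $3\cdot10^6$ excluded, $\prod r/(r-1)$ over all larger primes reaches the required $4/3$ near $4\cdot10^8$, which only yields $\omega(N)$ of order $10^7$. The paper's bound comes from eliminating every prime from $5$ to $3\cdot10^6$ via factor trees, then running the sieve with the sets $M_1,M_2$. The sieve discards any $r$ such that $\sigma(r^2)$ has a prime factor already shown not to divide $N$. This thins out the admissible ordinary primes enough that more than $4.6\cdot10^{10}$ of them are needed.
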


This improves the known bound $q\ge16$ obtained by combining earlier results:
\begin{itemize}
    \item $q\ne2\pmod{3}$~\cite{McDaniel:1970}
    \item $q\ne4$~\cite{Cohen:1987}
    \item $q\ne6$~\cite{Kanold:1950}
    \item $q\not\in\acc{10,12}$~\cite{CohenWilliams:1985}
\end{itemize}

The programs and data used in this paper are available at\\ \url{https://www.lirmm.fr/~ochem/opn/restrained/}

We will use Pace Nielsen's lower bound on the number $\omega(N)$ of distinct prime
factors of an OPN.

\begin{theorem}\cite{Nielsen:2015}\label{Pace}
If $N$ is an odd perfect number, then $\omega(N)\ge10$.
\end{theorem}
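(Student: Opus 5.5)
The statement to prove, immediately above, is Nielsen's bound $\omega(N)\ge 10$ for every odd perfect number $N$. The paper quotes it from~\cite{Nielsen:2015} and uses it as a black box. A self-contained proof is a large computation, so what follows is the plan of that computation rather than something I would redo by hand.

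The approach is proof by contradiction with a factor-chain search. Suppose $N$ is an odd perfect number with $\omega(N)=k\le 9$. Two consequences drive the search.
\begin{itemize}
\item Since $\sigma(N)/N=2$ is a product of terms $\sigma(p^a)/p^a<p/(p-1)$, we need $\prod_{p\mid N} p/(p-1)>2$. This bounds the primes: the $i$-th smallest prime factor is at most roughly $(k-i+1)$ times a quantity determined by the smaller ones.
\item Since $\sigma(N)=2N$, every prime dividing $\sigma(p^a)$ for a component $p^a$ of $N$ must itself lie among the $k$ primes of $N$.
\end{itemize}

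The search proceeds in the following order.
\begin{enumerate}
\item Use the abundancy bound to enumerate candidate sets of small primes. For instance, $3\mid N$ is forced when $k$ is small, by earlier results in the Grün/Norton style. Combined with Euler's form $N=p^e m^2$, this confines the initial branches to finitely many cases.
\item Grow a tree. At each node, pick a prime $r$ already forced to divide $N$ and branch on its exponent $a$. The exponent is even for non-special $r$, and $\equiv 1\pmod 4$ for the special prime. Factor $\sigma(r^a)$ and add its prime factors to the set.
\item Prune a branch as soon as any of the following happens: more than $k$ primes are forced; the abundancy exceeds $2$ with no way to come back down; or the abundancy cannot reach $2$ using the remaining free slots, each bounded by the smallest admissible prime.
\item Handle large exponents $a$, where branching cannot be exhausted, using Nielsen's key device. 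Bound the abundancy contributions via $\sigma(r^a)/r^a \approx r/(r-1)$. Then use cyclotomic arguments: $\Phi_d(r)$ divides $\sigma(r^a)$ for every $d\mid a+1$, and Zsigmondy/Bang gives $\Phi_d(r)$ a primitive prime divisor $\equiv 1 \pmod d$. This shows that very large exponents force too many distinct primes.
\end{enumerate}

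The main obstacle is the exponents that cannot be enumerated, where the factorizations of $\sigma(r^a)$ are out of reach. The remedy I would try first is Nielsen's ``roadblock'' accounting. Primes whose exponent or size is unknown are treated as unknowns, and one checks that the equation $\sigma(N)/N=2$ cannot be met exactly, using the near-equality constraints and the $2$-adic and $3$-adic valuations of $\sigma(N)$. The remaining difficulty is engineering, namely keeping the tree small enough for the computation to terminate. Accordingly, in this paper I would simply cite~\cite{Nielsen:2015} rather than reproduce the proof.
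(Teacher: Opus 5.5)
Citing \cite{Nielsen:2015} is exactly what the paper does. The paper gives no proof of $\omega(N)\ge10$ and uses it only as a black box, so your bottom line matches.

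The outline you attach, however, should not be presented as a sketch of Nielsen's argument, because step~4 fails as stated. Zsigmondy's theorem yields one primitive prime divisor of $\Phi_d(r)$ for each divisor $d>1$ of $a+1$. The case $d=1$ must be excluded, since $\Phi_1(r)=r-1$ does not divide $\sigma(r^a)$ in general. So a large exponent $a$ guarantees only $\tau(a+1)-1$ new primes. When $a+1$ is prime, $\sigma(r^a)=\Phi_{a+1}(r)$, which may be a single prime, as in $\sigma(3^6)=1093$. Nothing in the cyclotomic argument shows that very large exponents force too many distinct primes.

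What large exponents do give is that the new primes are $\equiv1\pmod{a+1}$, hence large and of negligible abundancy. This reduces the branch to asking whether the known part, times roughly $r/(r-1)$, times a few large unknown primes, can equal $2$. Your pruning rules cannot close such a branch when the attainable range of abundancies contains $2$, and that is exactly the hard case.

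The new ingredient named in Nielsen's abstract is his explicit upper bound $10^{12}P^2N<2^{4^k}$ (the ``upper bounds'' of his title), used together with extensive computation. For $k\le9$ it bounds every component $r^a$ by $2^{4^9}$. This caps the exponents and primes that your step~4 was meant, but fails, to control.
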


We will also need the following result that follows as a special case of Proposition~1 from Bennett and Levin~\cite{{BennettLevin:2015}}.
\begin{lemma}\label{rtt}
There is no solution to $z^2+z+1=r^t$ where $z$ and $r$ are prime and $t>1$.
\end{lemma}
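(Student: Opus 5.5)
The plan is to treat the exponent $t$ in two cases, even and odd. The even case is elementary. The odd case reduces to a classical Nagell--Ljunggren equation whose complete solution is known; this is exactly the content of the cited Proposition~1 of Bennett and Levin~\cite{BennettLevin:2015}.

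\emph{Even exponent.} Suppose $t=2s$ with $s\ge1$. Multiplying $z^2+z+1=r^{2s}$ by $4$ gives
\[(2z+1)^2+3=(2r^{s})^2,\]
so
\[(2r^s-2z-1)(2r^s+2z+1)=3.\]
Both factors are positive integers and the second exceeds $1$, so the first must equal $1$ and the second must equal $3$. Adding the two equations gives $4r^s=4$, hence $r=1$, which contradicts the primality of $r$. This case uses no hypothesis on $z$.

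\emph{Odd exponent.} Now $t\ge3$ is odd. Let $\ell$ be an odd prime dividing $t$ and put $y=r^{t/\ell}$. Then
\[\frac{z^3-1}{z-1}=z^2+z+1=y^{\ell},\]
which is the Nagell--Ljunggren equation $\frac{x^n-1}{x-1}=y^\ell$ with $n=3$. Ljunggren's theorem states that for $n=3$ the only solutions with $|x|>1$ and $\ell\ge2$ are $x=18$ and $x=-19$, both with $y^\ell=7^3$. Since $z$ is a prime, it is positive and different from $18$, so no solution exists. This finishes the proof.

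This theorem is the real input, and I would invoke it as given rather than reprove it, since this is precisely the special case of Bennett--Levin being cited.

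\emph{Elementary reductions, if one wanted to avoid the theorem.} The primality of $z$ and $r$ allows a few reductions before any deep argument is needed:
\begin{itemize}
\item $r\ne3$, because $z^2+z+1$ is never divisible by $9$.
\item $z\nmid r-1$. Indeed, $z\mid r-1$ would force $r\ge z+1$, so $r^t\ge r^2>z^2+z+1$.
\item From $z(z+1)=r^t-1=(r-1)(r^{t-1}+\dots+1)$ and $\gcd(z,r-1)=1$, we get $r-1\mid z+1$, and $z$ divides the cyclotomic part $r^{t-1}+\dots+1$.
\end{itemize}
These constraints do not obviously close the argument. One could work in $\mathbb{Z}[\omega]$: writing $(z-\omega)(z-\bar\omega)=r^t$ with coprime factors (coprime because $r\ne3$), we get $z-\omega=\varepsilon\pi^t$ for a unit $\varepsilon$ and some $\pi$. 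This leads to a cubic Thue equation that genuinely needs Ljunggren/Baker-type methods. That step is the main obstacle, and it is why the lemma is quoted from the literature.
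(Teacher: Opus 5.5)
Your proof is correct and takes essentially the same approach as the paper, which simply quotes Bennett--Levin's Proposition~1. Like you, it relies on the known resolution of the $n=3$ case $x^2+x+1=y^q$ of the Nagell--Ljunggren equation, whose only solutions with $|x|>1$ are $x=18$ and $x=-19$, both with $y^q=7^3$. Your elementary handling of even $t$ and the reduction of odd $t$ to a prime exponent $\ell$ are valid but not strictly needed, since that classification already covers every $t\ge2$ directly.
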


\begin{lemma}\label{NotFromNotable}
If $s$ and $n$ are odd primes such that $\operatorname{ord}_s(n)$ is even, then for every even $q$, $s\nmid\sigma(n^q)$.
\end{lemma}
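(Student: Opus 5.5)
We argue by contradiction, using only the multiplicative order of $n$ modulo $s$ and the factorization of $\sigma(n^q)$.

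Suppose $s\mid\sigma(n^q)$. If $s=n$, then $\sigma(n^q)=1+n+\dots+n^q\equiv 1\pmod{n}$, so $s\nmid\sigma(n^q)$. Hence $s\ne n$, and since both are primes, $\gcd(s,n)=1$ and $d=\operatorname{ord}_s(n)$ is defined.

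If $n\equiv 1\pmod s$, then $\sigma(n^q)\equiv q+1\pmod s$. In this case $d=1$ is odd, which is excluded by hypothesis. So $n\not\equiv1\pmod s$, and we may write $\sigma(n^q)=\frac{n^{q+1}-1}{n-1}$ with $n-1$ invertible modulo $s$. Then $s\mid\sigma(n^q)$ forces $n^{q+1}\equiv 1\pmod s$, that is, $d\mid q+1$. But $q$ is even, so $q+1$ is odd. An even number $d$ cannot divide an odd number, which is the contradiction.

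The only point needing care is the case $n\equiv1\pmod s$, and the hypothesis that $d$ is even rules it out immediately. The step $s\ne n$ also follows at once from $\sigma(n^q)\equiv1\pmod n$.
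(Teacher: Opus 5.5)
Your proof is correct and matches the paper's argument: $s\mid\sigma(n^q)$ gives $n^{q+1}\equiv1\pmod s$, so the even order $\operatorname{ord}_s(n)$ would have to divide the odd number $q+1$. Your extra case checks ($s=n$ and $n\equiv1\pmod s$) are harmless but unnecessary, since $n^{q+1}-1=(n-1)\sigma(n^q)$ already gives $s\mid n^{q+1}-1$ directly, and assuming $\operatorname{ord}_s(n)$ is defined already means $s\nmid n$.
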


\begin{proof}
Assume for the sake of contradiction that $s\mid\sigma(n^q)=\frac{n^{q+1}-1}{n-1}$. 
So $s\mid n^{q+1}-1$, which means that $n^{q+1}\equiv1\pmod s$. 
By definition, $\operatorname{ord}_s(n)$ divides the exponent $q+1$.
This is a contradiction since $\operatorname{ord}_s(n)$ is even and $q+1$ is odd.
\end{proof}

\begin{lemma}\label{Robin}
Let $x$, $s$, and $t$ be distinct primes such that $x\nmid t-1$ and $s\nmid t-1$. If $\operatorname{ord}_s(t)\mid\operatorname{ord}_x(t)$, then for every integer $v\ge 1$, $x\mid\sigma(t^v)$ implies $s\mid\sigma(t^v)$.
\end{lemma}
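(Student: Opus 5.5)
We argue as in the proof of Lemma~\ref{NotFromNotable}, working with multiplicative orders and the factorization $\sigma(t^v)=\frac{t^{v+1}-1}{t-1}$. Write $a=\operatorname{ord}_s(t)$ and $b=\operatorname{ord}_x(t)$; these are defined because $s$, $x$, $t$ are distinct primes, so $t$ is invertible modulo $s$ and modulo $x$. The plan is to turn the divisibility $x\mid\sigma(t^v)$ into a divisibility relation between $b$ and $v+1$, transfer it to $a$ via the hypothesis $a\mid b$, and then turn it back into $s\mid\sigma(t^v)$. The hypotheses $x\nmid t-1$ and $s\nmid t-1$ are exactly what allows us to pass freely between $t^{v+1}-1$ and $\sigma(t^v)$.

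First, from $x\mid\sigma(t^v)$ and $(t-1)\sigma(t^v)=t^{v+1}-1$, we get $x\mid t^{v+1}-1$, hence $t^{v+1}\equiv1\pmod x$ and so $b\mid v+1$. Since $a\mid b$, we obtain $a\mid v+1$, that is, $t^{v+1}\equiv 1\pmod s$, so $s\mid t^{v+1}-1=(t-1)\sigma(t^v)$. Because $s$ is prime and $s\nmid t-1$, it follows that $s\mid\sigma(t^v)$, which is the claim.

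There is essentially no obstacle. The only point needing care is the last step, where the hypothesis $s\nmid t-1$ is what rules out the possibility that all the $s$-divisibility of $t^{v+1}-1$ is absorbed by the factor $t-1$; for instance, if $s\mid t-1$ then $a=1$ and the conclusion can fail. The hypothesis $x\nmid t-1$ is not actually needed for the implication itself; it only guarantees $b>1$, so that $x\mid\sigma(t^v)$ is a nontrivial condition.
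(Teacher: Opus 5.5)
Your proof is correct and follows the paper's argument step for step: you pass from $x\mid\sigma(t^v)$ to $\operatorname{ord}_x(t)\mid v+1$, transfer this to $\operatorname{ord}_s(t)$ via the hypothesis, and use $s\nmid t-1$ to return from $t^{v+1}-1$ to $\sigma(t^v)$. Your side remark is also right: the paper invokes $x\nmid t-1$ in the first step, but it is not needed there, and it even follows from the other hypotheses, since $s\nmid t-1$ forces $1<\operatorname{ord}_s(t)\le\operatorname{ord}_x(t)$.
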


\begin{proof}
Assume that $x\mid\sigma(t^v)$. The divisor sum is given by $\sigma(t^v)=\frac{t^{v+1}-1}{t-1}$. 
Since $x\mid\sigma(t^v)$ and $x\nmid t-1$, it must be that $x\mid t^{v+1}-1$, meaning $t^{v+1}\equiv1\pmod x$. 

By the definition of multiplicative order, $\operatorname{ord}_x(t)$ must divide the exponent $v+1$. 
By our hypothesis, $\operatorname{ord}_s(t)\mid\operatorname{ord}_x(t)$. 
By transitivity of divisibility, this implies that $\operatorname{ord}_s(t)\mid v+1$. 

Because $\operatorname{ord}_s(t) \mid v+1$, it follows that $t^{v+1}\equiv1\pmod s$, meaning $s\mid t^{v+1}-1$.
Since $s$ is prime and $s \nmid t-1$, $s$ cannot divide the denominator of $\frac{t^{v+1}-1}{t-1}$. 
Therefore, $s$ must divide the quotient. Thus, $s\mid\sigma(t^v)$.
\end{proof}

\begin{lemma}\label{p1mod3}
If $N$ is a restrained odd perfect number, then $p\equiv1\pmod3$.
\end{lemma}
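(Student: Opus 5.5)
We argue by contradiction. Suppose $p\not\equiv1\pmod3$. Since $p\equiv1\pmod4$, $p\ne3$, so $p\equiv2\pmod3$. The plan is to track primes $\equiv2\pmod3$ and the $3$-adic valuation through $\sigma(N)=2N$, using the fact that ordinary components are very rigid.

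\textbf{Ordinary components.} For an ordinary prime $r$, we have $\sigma(r^2)=r^2+r+1$. Every prime divisor of $r^2+r+1$ other than $3$ is $\equiv1\pmod3$, since $r$ has order $3$ modulo it. Moreover:
\begin{itemize}
\item if $r\equiv1\pmod3$, then $3\,\|\,r^2+r+1$, because $r^2+r+1\equiv3\pmod9$;
\item if $r\equiv2\pmod3$, then $3\nmid r^2+r+1$.
\end{itemize}
Hence a prime $s\equiv2\pmod3$ dividing $N$ can only come from $\sigma(p^e)$ or $\sigma(n^q)$. In particular $p\mid\sigma(n^q)$, since $p\nmid\sigma(p^e)$.

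\textbf{The special component.} Since $e\equiv1\pmod4$, $e+1$ is even, so $p+1\mid\sigma(p^e)$ and therefore $3\mid\sigma(p^e)$. Thus $3\mid N$, and $3$ is either ordinary or notable.

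\textbf{Case 1: $3$ is ordinary.} Then $v_3(N)=2$. Each ordinary prime $r\equiv1\pmod3$ contributes one factor of $3$, and $\sigma(p^e)$ contributes at least one. Hence at most one ordinary prime is $\equiv1\pmod3$. Also $\sigma(3^2)=13$, so $13\mid N$.
\begin{itemize}
\item If $13$ is ordinary, it is that single prime, and $\sigma(13^2)=3\cdot61$ forces $61\mid N$. But $61\equiv1\pmod3$ cannot be ordinary, so $61=n$ or $61=p$; the latter is impossible since $p\equiv2\pmod3$. So $61=n$.
\item Otherwise $13$ itself is $n$ (again not $p$, as $p\equiv2\pmod3$).
\end{itemize}
Either way, almost all of the $\ge8$ remaining primes (Theorem~\ref{Pace}) must be ordinary primes $\equiv2\pmod3$. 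All of them must be absorbed by $\sigma(p^e)\sigma(n^q)$, while their own $\sigma(r^2)$ contribute many primes $\equiv1\pmod3$ that have nowhere to go. Each $\sigma(r^2)$ with $r\equiv2\pmod3$ is coprime to $3$, so its prime factors are all $\equiv1\pmod3$, and these must be covered by at most one ordinary prime together with $n$. Lemma~\ref{rtt} prevents $r^2+r+1$ from being a prime power, so each such $\sigma(r^2)$ has at least two distinct prime factors, all $\equiv1\pmod3$. This should give a contradiction by a short counting argument.

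\textbf{Case 2: $3=n$ is notable.} Then $p\mid\sigma(3^q)$, so $\operatorname{ord}_p(3)$ divides $q+1$ and is therefore odd. The aim is to rule this out or push it into a counting contradiction. Lemma~\ref{NotFromNotable} says that no prime $s$ with $\operatorname{ord}_s(3)$ even can divide $\sigma(3^q)$, so every prime $\equiv2\pmod3$ in $N$ with this property must divide $\sigma(p^e)$. Lemma~\ref{Robin} can then be used to chain divisibilities inside $\sigma(p^e)$ and $\sigma(3^q)$.

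\textbf{Main obstacle.} I expect the real difficulty to be this second case (and closing Case 1 rigorously). There the $3$-adic bookkeeping gives no immediate bound, and one must combine the mod-$3$ structure of ordinary components with Lemmas~\ref{rtt}, \ref{NotFromNotable} and~\ref{Robin} and Theorem~\ref{Pace} to reach a contradiction.
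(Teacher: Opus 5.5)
Your setup ($p\mid\sigma(n^q)$, and $3\mid p+1\mid\sigma(p^e)$) and your split on whether $3$ is ordinary or notable match the paper. The gap is the notable case. You leave it open and expect it to be the hard one, but it needs only one short idea. You already have that $\operatorname{ord}_p(3)$ divides the odd number $q+1$. Since $q$ is even, $3\equiv3^{q+2}=\left(3^{(q+2)/2}\right)^2\pmod p$, so $3$ is a quadratic residue modulo $p$. But $p\equiv1\pmod4$ makes quadratic reciprocity read $\left(\tfrac3p\right)=\left(\tfrac p3\right)$, and $p\equiv2\pmod3$ gives $\left(\tfrac p3\right)=\left(\tfrac23\right)=-1$, a contradiction. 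This is the paper's argument, and it needs no counting or bound on $\omega(N)$. The route you sketch through Lemmas~\ref{NotFromNotable} and~\ref{Robin} would not get there. Those lemmas are factor-tree tools that need explicit multiplicative orders of specific primes, whereas here $p$ is an arbitrary unknown prime.

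In Case 1, the counting you defer rests on a false claim. Lemma~\ref{rtt} only excludes $r^2+r+1=s^t$ with $t>1$, so $\sigma(r^2)$ may well be prime (e.g.\ $\sigma(5^2)=31$). The correct count runs as follows. The primes $\equiv1\pmod3$ dividing $N$ lie in $\{x,n\}$, where $x$ is the (at most one) ordinary prime $\equiv1\pmod3$. Since $x^2\parallel N$, at most two ordinary primes $r\equiv2\pmod3$ have $x\mid\sigma(r^2)$. Every other such $r$ satisfies $r^2+r+1=n^a$, so $a=1$ by Lemma~\ref{rtt}, and then $r$ is determined by $n$. Hence there are at most three ordinary primes $\equiv2\pmod3$. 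Adding $3$, $x$, $p$, $n$ gives $\omega(N)\le7$, contradicting Theorem~\ref{Pace}. Your subcases on $13$ and $61$ are not needed.
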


\begin{proof}
Suppose for the sake of contradiction that $p\equiv2\pmod3$.
This implies that $p$ is not produced by an ordinary component.
Since $p$ is obviously not produced by the special component, we must have $p\mid\sigma(n^q)$.
$p\equiv2\pmod3$ also implies that $3\mid p+1$, so that $3\mid N$.

If 3 is ordinary, i.e. $3^2\parallel N$, then the non-special components must produce at most one copy of $3$. 
So at most one ordinary prime $x$ is congruent to $1\pmod3$.
Thus $N$ contains at most two primes congruent to $1\pmod3$, namely $x$ and $n$.
Since $\omega(N)\ge10$ by Theorem~\ref{Pace}, excluding $p$, $n$, and $x$ implies that at least 7 ordinary primes $s$ are such that $\rad\paren{\sigma\paren{s^2}}\mid x\cdot n$.
Now $N$ contains exactly two copies of the ordinary prime $x$, so that at least 5
ordinary primes $s$ are such that $\rad\paren{\sigma\paren{s^2}}\mid n$.
That is, we must have 5 distinct solutions to $s^2+s+1=n^a$, which contradicts Lemma~\ref{rtt}.

If 3 is notable, we have $p\mid\sigma(3^q)$.
This implies $3^{q+1}\equiv1\pmod p$.
Therefore $\operatorname{ord}_p(3)$ must divide the odd exponent $q+1$.
So $\operatorname{ord}_p(3)$ is odd, which implies that 3 is a quadratic residue modulo $p$.
Since $p\equiv1\pmod4$, we have $p\equiv 5\pmod{12}$ and we write $p=12k+5$.
Now we compute the suitable Legendre symbol.
\begin{align*}
\paren{\tfrac3p} & = \paren{\tfrac p3}\cdot\paren{-1}^{\tfrac{3-1}2\cdot\tfrac{p-1}2}\text{ by quadratic reciprocity}\\
 & = \paren{\tfrac p3}\cdot\paren{-1}^{6k+2}\\
 & = \paren{\tfrac p3}\\
 & = \paren{\tfrac{12k+5}3}\\
 & = \paren{\tfrac23}\\
 & = -1
\end{align*}
This means that 3 is a quadratic non-residue modulo $p$, a contradiction.

We have a contradiction in both cases, therefore $p\equiv1\pmod3$.
\end{proof}

We say that an ordinary prime factor $x$ of $N$ is \emph{regular} if $\rad\paren{\sigma\paren{x^2}}\mid p\cdot n$. Every prime factor of $N$ that is not regular is \emph{irregular}. We also extend the definition of regular and irregular to a component according to its prime.

Throughout the paper, we use factor trees to rule out a given prime factor of $N$.
A branch of a tree is cut when we reach one the following contradictions.
We can use one of the previous lemmas.
If the abundancy is already greater than 2, we write $\sigma_{-1}(\cdotp)>2$ in the paper and $a(\cdotp)>2$ in the data files.
If a prime $t$ is assumed to be ordinary and at least three copies of $t$ are already produced by the known components, we write XS $t$.
If a prime $t$ is produced and $t$ has been previously ruled out as a factor of $N$, we write F $t$.

Since $N$ is restrained, the exponent of 3 in $N$ is in $\acc{0,2,q}$.
The proof of Theorem~\ref{main} is split according to these three cases in the following sections.

\section{When 3 is not a factor}
In this section, we suppose that $N$ is a restrained odd perfect number that is not divisible by 3 and we show that no such $N$ exists. We begin with structural properties of $N$.

\begin{lemma}\label{struct}
Let $N$ be as stated and let $s$ be an ordinary prime. Then
\begin{enumerate}
\item $s\equiv5\pmod{6}$,
\item $s$ is regular, i.e. $\sigma(s^2)=p^a\cdot n^b$,
\item $n\equiv1\pmod{6}$.
\end{enumerate}
\end{lemma}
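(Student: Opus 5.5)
Since $3\nmid N$, no prime factor $t$ of $N$ can satisfy $3\mid\sigma(t^v)$. The plan is to exploit this constraint first for ordinary primes, then for the notable prime, and finally to use a counting argument based on Theorem~\ref{Pace} and Lemma~\ref{rtt} to force regularity.

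\textbf{Part 1.} Let $s$ be ordinary. Then $\sigma(s^2)=s^2+s+1$. If $s\equiv1\pmod3$, then $s^2+s+1\equiv 0\pmod 3$, which would put $3\mid N$. Since $3\nmid N$ also rules out $s=3$, we get $s\equiv2\pmod3$. As $s$ is odd, this gives $s\equiv5\pmod6$.

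\textbf{Part 3.} Lemma~\ref{p1mod3} gives $p\equiv1\pmod3$. Suppose instead that $n\equiv2\pmod3$. Then every prime factor of $N$ other than $p$ is $\equiv2\pmod3$. Every prime factor $r\ne3$ of $x^2+x+1$ satisfies $r\equiv1\pmod3$, since $x$ then has order 3 modulo $r$. Thus for each ordinary $s$, $\sigma(s^2)$ can only be a power of $p$. Moreover, $\sigma(n^q)\equiv \sum_{i=0}^{q}(-1)^i = 1\pmod 3$, so this gives no contradiction on its own. However, the prime factors of $\sigma(n^q)$ must be $p$ or primes $\equiv 2\pmod 3$ among the ordinary primes.

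I would then count. By Theorem~\ref{Pace}, there are at least 8 ordinary primes $s$ with $s^2+s+1=p^a$. Lemma~\ref{rtt} allows at most one such $s$, since $t>1$ is forced unless $s^2+s+1=p$, and $p$ can equal $s^2+s+1$ for only one $s$. This is a contradiction, so $n\equiv1\pmod3$, and hence $n\equiv1\pmod6$.

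\textbf{Part 2.} Any prime divisor $r$ of $\sigma(s^2)$ is $\equiv1\pmod3$, because $r\ne3$. Every prime factor of $N$ other than $p$ and $n$ is ordinary, and by Part 1 it is $\equiv2\pmod3$. Hence the only primes of $N$ that are $\equiv1\pmod3$ are $p$ and $n$. Therefore $\rad(\sigma(s^2))\mid p\cdot n$, so $s$ is regular and $\sigma(s^2)=p^a n^b$.

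The logical order is Part 1, then Part 2 (which uses only Part 1 and $p\equiv1\pmod3$), then Part 3. Part 3 is the main obstacle. In the case $n\equiv2\pmod3$, Part 2's argument already shows $\sigma(s^2)=p^a$ for all ordinary $s$. With at least 8 ordinary primes, Lemma~\ref{rtt} kills all but possibly the one with $a=1$, which gives the contradiction. The point to check carefully is that two distinct $s$ cannot both give $s^2+s+1=p$. This holds because $s^2+s+1$ is strictly increasing in $s$.
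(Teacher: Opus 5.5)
Your proof is correct and follows the paper's argument. Part 1 comes from $3\nmid\sigma(s^2)$. Part 2 comes from the fact that every prime divisor of $s^2+s+1$ other than $3$ is $\equiv1\pmod3$, while ordinary primes are $\equiv2\pmod3$. Part 3 comes from Lemma~\ref{rtt} together with the existence of at least two ordinary primes. The differences are only cosmetic: you argue Part 3 by contradiction, whereas the paper deduces directly from regularity that $n\mid\sigma(s^2)$ for some ordinary $s$. Your appeal to Lemma~\ref{p1mod3} is also unnecessary, since $p\equiv2\pmod3$ would already give $3\mid p+1\mid\sigma(p^e)$, contradicting $3\nmid N$.
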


\begin{proof}
First, $s\equiv2\pmod{3}$ since otherwise $3\mid\sigma(s^2)$.
Now, if a prime $\alpha$ divides $\sigma(s^2)$, then $\alpha\equiv1\pmod3$.
Thus $\alpha$ is not ordinary, that is, either $\alpha=p$ or $\alpha=n$.
So every ordinary prime $s$ is regular.
By Lemma~\ref{rtt}, either $\sigma(s^2)=p$ or $n\mid\sigma(s^2)$. Since $N$ has at least 2 distinct ordinary primes, there exists $s$ such that $n\mid\sigma(s^2)$. Therefore $n$ is congruent to $1\pmod3$.
\end{proof}

Now, we upper bound the abundancy of $N$ to show that it cannot reach 2.
We break it into 4 cases:
\subsection{Special and notable components}
Since $p\equiv1\pmod{12}$ and $n\equiv1\pmod{6}$, we obtain
$\sigma_{-1}\paren{p^e n^q}\le\sigma_{-1}\paren{13^\infty\cdot 7^\infty}=\tfrac{13}{12}\cdot\tfrac76=\tfrac{91}{72}$.
So we set $$A_1=\tfrac{91}{72}.$$

\subsection{Sporadic ordinary components}
Now we bound the abundancy of the ordinary components as follows.
By Lemma~\ref{struct}, every ordinary prime is regular.
Every pair $(a,b)\in\mathds{N}^2$ determines at most one ordinary
prime $s$ such that $\sigma(s^2)=p^a\cdot n^b$.
We do not consider fixed values of $p$ and $n$.
%We only remember that $p\equiv1\pmod4$.
Let $m(a,b)$ be the smallest solution to $\sigma(s^2)=p^a\cdot n^b$
where $s$, $p$, and $n$ are prime, $p\equiv1\pmod4$, and $s\equiv2\pmod3$ (since otherwise $3\mid\sigma(s^2)$).
The abundancy of the ordinary components is then at most
$$\prod_{(a,b)\in\mathds{N}^2}\tfrac{m(a,b)}{m(a,b)-1}.$$
Using the program pairs.py, we have checked every prime $s\equiv5\pmod{6}$ up to $10^9$ and found a few values of $m(a,b)$.
\begin{table}[!htb]
\centerline{
\begin{tabular}{|r|r|r|r|l|}
\hline
$(a,b)$ & $s=m(a,b)$ & $p$ & $n$ & comment \\
\hline
\hline
$(0,1)$ & 5 & - & 31 &  \\
\hline
$(1,1)$ & 29 & 13 & 67 &  \\
\hline
$(1,0)$ & 59 & 3541 & - &  \\
\hline
$(1,2)$ & 863 & 15217 & 7 &  \\
\hline
$(2,1)$ & 1667 & 13 & 16453 &  \\
\hline
$(1,3)$ & 1733 & 8761 & 7 &  \\
\hline
$(1,4)$ & 2819 & 61 & 19 &  \\
\hline
$(3,1)$ & 306419 & 13 & 42736873 &  \\
\hline
$(1,5)$ & 1345913 & 107781469 & 7 &  \\
\hline
$(1,6)$ & 2858543 & 69454657 & 7 & discarded since $a+b\ge7$ \\
\hline
$(4,1)$ & 3619619 & 13 & 458725021 &  \\
\hline
$(1,7)$ & 30388409 & 1121320237 & 7 & discarded since $a+b\ge7$ \\
\hline
$(5,1)$ & 68839637 & 13 & 12763223899 &  \\
\hline
$(1,8)$ & 453031331 & 35601816493 & 7 & discarded since $a+b\ge7$ \\
\hline
$(6,1)$ & 993359207 & 13 & 204433719073 & discarded since $a+b\ge7$ \\
\hline
\end{tabular}}
\caption{Smallest solutions to $\sigma(s^2)=p^a\cdot n^b$.}\label{tab1}
\end{table}

The abundancy due to the pairs in Table 1 such that $a+b\le6$ is at most
$$A_2=\tfrac54\cdot\tfrac{29}{28}\cdot\tfrac{59}{58}\cdot\tfrac{863}{862}\cdot\tfrac{1667}{1666}\cdot\tfrac{1733}{1732}\cdot\tfrac{2819}{2818}\cdot\tfrac{306419}{306418}\cdot\tfrac{1345913}{1345912}\cdot\tfrac{3619619}{3619618}\cdot\tfrac{68839637}{68839636}.$$
% 5/4*29/28*59/58*863/862*1667/1666*1733/1732*2819/2818*306419/306418*1345913/1345912*3619619/3619618*68839637/68839636
%         1.32051931736840940811

\subsection{Other pairs $(a,b)$ with $a+b\le6$}
Some pairs $(a,b)$ with $a+b\le6$ cannot correspond to a solution to $\sigma(s^2)=p^a\cdot n^b$.
\begin{itemize}
\item The pair $(0,0)$.
\item The pairs $(a,0)$ with $a\ge2$ and $(0,b)$ with $b\ge2$, by Lemma~\ref{rtt}.
\item The pairs $(2,2)$, $(2,4)$, and $(4,2)$: since $s^2<\sigma(s^2)<(s+1)^2$), then $\sigma(s^2)$ is not a square.
\end{itemize}

There remain 14 pairs: the 11 pairs in Table 1 and the 3 pairs $(2,3)$, $(3,2)$, and $(3,3)$.
The previous computation implies that $m(a,b)>10^9$ for these 3 pairs. 
%There are most $\tfrac{7\cdot8}2=28$ pairs with $a+b\le6$.
Therefore we can upper bound their contribution to the abundancy by $$A_3=\paren{1+10^{-9}}^3.$$

\subsection{Pairs $(a,b)$ with $a+b\ge 7$}
From $\sigma(s^2)=p^a\cdot n^b$, we set $k=a+b$ and we deduce $(s+1)^2\ge7^k$, since $p\ge7$ and $n\ge7$.
So $s\ge7^{\tfrac k2}-1$. Therefore the abundancy due to $(a,b)$ is at most $1+\tfrac1{s-1}\le1+\tfrac1{7^{k/2}-2}$.
Lemma~\ref{rtt} rules out the pairs $(k,0)$ and $(0,k)$.
So there are $k-1$ pairs such that $a+b=k$.
Then a bound on the abundancy is the infinite product
$$A_{\infty}=\prod_{k\ge7}\paren{1+\tfrac1{7^{k/2}-2}}^{k-1}.$$

\subsection{The contradiction}
We have
\begin{itemize}
\item $A_1\approx1.26388888888888888888$
\item $A_2\approx1.32051931736840940811$
\item $A_3\approx1.00000000300000000300$
\item $A_{\infty}\approx1.01178747704420359446$
\end{itemize}    
Finally, we get $\sigma_{-1}(N)\le A_1\cdot A_2\cdot A_3\cdot A_{\infty}\approx1.68866287554178<2$

\section{When 3 is ordinary}
In this section, we suppose that $N$ is a restrained odd perfect number such that $3^2\parallel N$ and we show that no such $N$ exists. 
Below is the factor tree derived from the component $3^2$, without the subtrees corresponding to the notable component.\\

$3^2\Longrightarrow 13$\\
\hspace*{8mm} $13^e\Longrightarrow2\cdot7$\\
\hspace*{12mm} $7^2\Longrightarrow3\cdot19$\\
\hspace*{16mm} $19^2\Longrightarrow3\cdot127$\\
\hspace*{20mm} $127^2\Longrightarrow3\cdot5419$\ \ \ XS 3\\
\hspace*{20mm} $127^q$\\
\hspace*{16mm} $19^q$\\
\hspace*{12mm} $7^q$\\
\hspace*{8mm} $13^2\Longrightarrow3\cdot61$\\
\hspace*{12mm} $61^e\Longrightarrow2\cdot31$\\
\hspace*{16mm} $31^2\Longrightarrow3\cdot331$\\
\hspace*{20mm} $331^2\Longrightarrow3\cdot7\cdot5233$\ \ \ XS 3\\
\hspace*{20mm} $331^q$\\
\hspace*{16mm} $31^q$\\
\hspace*{12mm} $61^2\Longrightarrow3\cdot13\cdot97$\\
\hspace*{16mm} $97^e\Longrightarrow2\cdot7^2$\\
\hspace*{20mm} $7^2\Longrightarrow3\cdot19$\ \ \ XS 3\\
\hspace*{20mm} $7^q$\\
\hspace*{16mm} $97^2\Longrightarrow3\cdot3169$\ \ \ XS 3\\
\hspace*{16mm} $97^q$\\
\hspace*{12mm} $61^q$\\
\hspace*{8mm} $13^q$\\

This tree shows that, in addition to $3^2\parallel N$, we have $13\mid N$ and the notable prime is in $S=\acc{7, 13, 19, 31, 61, 97, 127, 331}$. In particular, $n\equiv1\pmod3$.

% Every unfinished branch ends by specifying $n$.
% We have the following strategy to cut a branch when $p$ is also known.

% We want to rule out small primes as factors of $N$. To rule out primes in $S$,
% are difficult 
% We start by ruling out small primes as factor of $N$.

First, we consider the ordinary primes that produce exclusively prime numbers in $\acc{3}\cup S\setminus\acc{97}$. Using the program all.cpp, we find the set $R$ of solutions to 
$\rad\paren{\sigma\paren{x^2}}\mid3\cdot7\cdot13\cdot19\cdot31\cdot61\cdot127\cdot331$ where $x$ is an odd prime and $x<10^{10}$.

\begin{table}[!htb]
\centerline{
\begin{tabular}{|r|l|l|l|l|}
\hline
$x$ & $x\in S$? & $x\in R$? & $\sigma\paren{x^2}$ & Order of elimination\\
\hline
\hline
3 & & yes & 13 &  \\ \hline
5 & & yes & 31 & 1 \\ \hline
7 & yes & yes & $3\cdot19$ &  \\ \hline
11 & & yes & $7\cdot19$ & 2 \\ \hline
13 & yes & yes & $3\cdot61$ &  \\ \hline
19 & yes & yes & $3\cdot127$ & 8 \\ \hline
31 & yes & yes & $3\cdot331$ & 10 \\ \hline
67 & & yes & $3\cdot7^2\cdot31$ & 3 \\ \hline
107 & & yes & $7\cdot13\cdot127$ & 4 \\ \hline
127 & yes & & $3\cdot5419$ & 6 \\ \hline
191 & & yes & $7\cdot13^2\cdot31$ & 9 (ruled out in 3p2.txt)  \\ \hline
331 & yes & & $3\cdot7\cdot5233$ & 7 \\ \hline
653 & & yes & $7\cdot13^2\cdot19^2$ & 5 \\ \hline
2819 & & yes & $19^4\cdot61$ &  \\ \hline
\end{tabular}}
\caption{This useful table contains 127, 331, and every prime in $R$, as well as the order in which they are ruled out.}\label{tab1}
\end{table}

We rule out some of the primes contained in Table 2, in the given order. 

\subsection*{Ruling out 5}

\hspace*{8mm}$5^e$\ \ \ Lemma~\ref{p1mod3}\\
\hspace*{8mm}$5^2\Longrightarrow 31$\\
\hspace*{12mm}$3^2\Longrightarrow 13$\\
\hspace*{16mm}$13^e\Longrightarrow2\cdot7\ \ \ \sigma_{-1}\paren{3^2\cdot5^2\cdot7^2}>2$\\
\hspace*{16mm}$13^2\Longrightarrow3\cdot61\ \ \ \sigma_{-1}\paren{3^2\cdot5^2\cdot13^2\cdot31^2}>2$\\
\hspace*{16mm}$13^q\ \ \ \sigma_{-1}\paren{3^2\cdot5^2\cdot13^2\cdot31^2}>2$

\subsection*{Ruling out 11}

\hspace*{4mm} $11^2\Longrightarrow7\cdot19$\\
\hspace*{8mm} $3^2\Longrightarrow13\ \ \ \sigma_{-1}\paren{3^2\cdot7^2\cdot11^2\cdot13\cdot19^2}>2$

\subsection*{Ruling out 67}

\hspace*{4mm} $67^2\Longrightarrow3\cdot7^2\cdot31$\\
\hspace*{8mm} $7^2\Longrightarrow3\cdot19\ \ \ \sigma_{-1}\paren{3^2\cdot7^2\cdot13\cdot19^2\cdot31^2\cdot67^2}>2$\\
\hspace*{8mm} $7^q$\\
\hspace*{12mm} $31^2\Longrightarrow3\cdot331$\\
\hspace*{16mm} $331^2\Longrightarrow3\cdot7\cdot5233$\ \ \ XS 3

\subsection*{Ruling out 107}

\hspace*{4mm} $107^2\Longrightarrow7\cdot13\cdot127$\\
\hspace*{8mm} $127^2\Longrightarrow3\cdot5419$\\
\hspace*{12mm} $5419^2\Longrightarrow3\cdot31\cdot313\cdot1009$\\
\hspace*{16mm} $1009^1\Longrightarrow2\cdot5\cdot101$\ \ \ F 5\\
\hspace*{16mm} $1009^2\Longrightarrow3\cdot37\cdot9181$\ \ \ XS 3\\
\hspace*{8mm} $127^q$\\
\hspace*{12mm} $7^2\Longrightarrow3\cdot19$\\
\hspace*{16mm} $19^2\Longrightarrow3\cdot127$\\
\hspace*{20mm} $13^e$\ \ \ See below\\
\hspace*{20mm} $13^2\Longrightarrow3\cdot61$\ \ \ XS 3

We need to handle the branch leading to $13^e$, which contains the components $3^2$, $7^2$, $13^e$, $19^2$, and $127^q$. Since $107\equiv2\pmod3$, 107 cannot be produced by an ordinary component. 
Since $\operatorname{ord}_{107}(127)=106$, Lemma~\ref{NotFromNotable} ensures that $127^q$ does not produce 107. So $107$ must be produced by the special component $13^e$. Since $\operatorname{ord}_{107}(13)=\operatorname{ord}_{194723}(13)=53$,
Lemma~\ref{Robin} implies that $194723\mid N$ and we can finish the branch.\\
\hspace*{4mm} $107^2\Longrightarrow7\cdot13\cdot127$\\
\hspace*{8mm} $127^q$\\
\hspace*{12mm} $7^2\Longrightarrow3\cdot19$\\
\hspace*{16mm} $19^2\Longrightarrow3\cdot127$\\
\hspace*{20mm} $13^e\Longrightarrow2\cdot7\cdot107\cdot194723\cdot...$\\
\hspace*{24mm} $194723^2\Longrightarrow7\cdot13\cdot19\cdot21930157$\ \ \ XS 7

% \subsection*{Ruling out 191}

% \hspace*{4mm} $191^2\Longrightarrow7\cdot13^2\cdot31$\\
% \hspace*{8mm} $7^2\Longrightarrow3\cdot19$\\
% \hspace*{12mm} $19^2\Longrightarrow3\cdot127\ \ \ \sigma_{-1}\paren{3^2\cdot7^2\cdot13^2\cdot19^2\cdot31^2\cdot127^2\cdot191^2}>2$\\
% \hspace*{12mm} $19^q$\\
% \hspace*{16mm} $31^2\Longrightarrow3\cdot331\ \ \ \sigma_{-1}\paren{3^2\cdot7^2\cdot13^2\cdot19^2\cdot31^2\cdot191^2\cdot331^2}>2$\\
% \hspace*{8mm} $7^q$\\
% \hspace*{12mm} $31^2\Longrightarrow3\cdot331$\\
% \hspace*{16mm} $331^2\Longrightarrow3\cdot7\cdot5233$\\
% \hspace*{20mm} $5233^e\Longrightarrow2\cdot2617$\\
% \hspace*{24mm} $2617^2\Longrightarrow3\cdot193\cdot11833$\ \ \ XS 3\\
% \hspace*{20mm} $5233^2\Longrightarrow3\cdot7\cdot31\cdot42073$\ \ \ XS 3\\

\subsection*{Ruling out 653}

\hspace*{4mm} $653^e$\ \ \ Lemma~\ref{p1mod3}\\
\hspace*{4mm} $653^2\Longrightarrow7\cdot13^2\cdot19^2$\\
\hspace*{8mm} $7^2\Longrightarrow3\cdot19$\\
\hspace*{12mm} $19^2$\ \ \ XS 19\\
\hspace*{12mm} $19^q$\\
\hspace*{16mm} $3^2\Longrightarrow13$\\
\hspace*{20mm} $13^e\Longrightarrow2\cdot7$\ \ \ See below\\
\hspace*{20mm} $13^2$\ \ \ XS 13\\
\hspace*{8mm} $7^q$\\
\hspace*{12mm} $19^2\Longrightarrow3\cdot127$\\
\hspace*{16mm} $127^2\Longrightarrow3\cdot5419$\\
\hspace*{20mm} $5419^2\Longrightarrow3\cdot31\cdot313\cdot1009$\ \ \ XS 3

We need to handle the branch leading to $13^e$, which contains the components $3^2$, $7^2$, $13^e$, $19^q$, and $653^2$. Since $653\equiv2\pmod3$, $653$ cannot be produced by an ordinary component. 

Suppose for the sake of contradiction that $653$ is produced by the notable component $19^q$.
Consider the $51$-digit prime factor of $19^{163}-1$:
$$s = 342824539081437076343940054698469728805379685794103$$
Notice that $\operatorname{ord}_{653}(19)=\operatorname{ord}_s(19)=163$. By Lemma~\ref{Robin}, our assumption $653\mid\sigma(19^q)$ implies $s\mid\sigma(19^q)$. Thus, $s$ is a factor of $N$.
Then we can finish the branch for $653\mid\sigma(19^q)$.\\

$653^2\Longrightarrow7\cdot13^2\cdot19^2$\\
\hspace*{8mm} $7^2\Longrightarrow3\cdot19$\\
\hspace*{12mm} $19^q\Longrightarrow653\cdot s\cdot...$\\
\hspace*{16mm} $13^e\Longrightarrow2\cdot7$\\
\hspace*{20mm} $s^2\Longrightarrow64513588025437\cdot266028891483340333\cdot P_{70}$\\
\hspace*{24mm} $64513588025437^2\Longrightarrow3\cdot...$\\
\hspace*{28mm} $266028891483340333^2\Longrightarrow3\cdot...$\ \ \ XS 3\\
Therefore 653 must be produced by the special component.
Then Lemma~\ref{Robin} implies that $13^e$ also produces 7499.
Now we can finish the branch for $653\mid\sigma(13^e)$.

$653^2\Longrightarrow7\cdot13^2\cdot19^2$\\
\hspace*{8mm} $7^2\Longrightarrow3\cdot19$\\
\hspace*{12mm} $19^q$\\
\hspace*{16mm} $13^e\Longrightarrow2\cdot7\cdot653\cdot7499\cdot...$\\
\hspace*{20mm} $7499^2\Longrightarrow7\cdot1093\cdot7351$\ \ \ XS 7

\subsection*{Ruling out 127}
First, we rule out $127^2$.\\

$127^2\Longrightarrow3\cdot5419$\\
\hspace*{8mm} $5419^2\Longrightarrow3\cdot31\cdot313\cdot1009$\\
\hspace*{12mm} $1009^e\Longrightarrow2\cdot5\cdot101$\ \ \ F 5\\
\hspace*{12mm} $1009^2\Longrightarrow3\cdot37\cdot9181$\ \ \ XS 3\\
Now, we consider the branch leading to $127^q$.\\

$3^2\Longrightarrow 13$\\
\hspace*{8mm} $13^e\Longrightarrow2\cdot7$\\
\hspace*{12mm} $7^2\Longrightarrow 3\cdot19$\\
\hspace*{16mm} $19^2\Longrightarrow 3\cdot127$\\
\hspace*{20mm} $127^q$

Since the components $7^2$ and $19^2$ produce the two copies of 3
needed by the component $3^2$, no other ordinary component can produce a copy of 3.
That is, every other ordinary prime is congruent to $2\pmod3$. Thus the only
prime factors of $N$ that are congruent to $1\pmod{3}$ are 7, 13, 19, and 127. 
The results in Table 2 show that there remains no solution to
$\rad\paren{\sigma\paren{x^2}}\mid7\cdot13\cdot19\cdot127$ where $x$ is prime and $x<10^{10}$.
% \begin{itemize}
% \item $\sigma\paren{11^2}=7\cdot19$
% \item $\sigma\paren{107^2}=7\cdot13\cdot127$
% \item $\sigma\paren{653^2}=7\cdot13^2\cdot19^2$
% \end{itemize}
% However, 11, 107, and 653 are ruled out.
Thus the only ordinary primes smaller than $10^{10}$ are 3, 7, and 19.
Let us review the components of $N$ to bound the abundancy. $N$ only contains
\begin{itemize}
\item the components $3^2$, $7^2$, $13^e$, $19^2$, $127^q$,
\item at most 2 irregular components producing the second copy of 7 and of 19,
\item at most 14 regular components with $a+b\le6$ (see section~2.3),
\item and regular components with $a+b\ge7$ (see section~2.4).
\end{itemize}
The abundancy of $N$ is thus at most
$$\frac{13}{3^2}\cdot\frac{3\cdot19}{7^2}\cdot\frac{13}{12}\cdot\frac{3\cdot127}{19^2}\cdot\frac{127}{126}\cdot\paren{1+\frac1{10^{10}}}^{2+14}\cdot A_{\infty}$$
which is at most $1.9592144484846309$.
%ochem@ochem-laptop:~/opn/JZ1$ calc 13/9*57/49*13/12*3*127/19/19*127/126*1.0000000001^16*1.01178747704420359446
%        1.95921444848463087426

\subsection*{Ruling out 331}
First, we rule out $331^2$.\\

$331^2\Longrightarrow3\cdot7\cdot5233$\\
\hspace*{8mm} $5233^e\Longrightarrow2\cdot2617$\\
\hspace*{12mm} $2617^2\Longrightarrow3\cdot193\cdot11833$\\
\hspace*{16mm} $193^2\Longrightarrow3\cdot7\cdot1783$\ \ \ XS 3\\
\hspace*{8mm} $5233^2\Longrightarrow3\cdot7\cdot31\cdot42073$\\
\hspace*{12mm} $42073^e\Longrightarrow2\cdot109\cdot193$\\
\hspace*{16mm} $193^2\Longrightarrow3\cdot7\cdot1783$\ \ \ XS 3\\
\hspace*{12mm} $42073^2\Longrightarrow3\cdot19\cdot409\cdot75931$\ \ \ XS 3\\
Now, we consider the branch leading to $331^q$.\\

$3^{2}\Longrightarrow 13$\\
\hspace*{8mm} $13^2\Longrightarrow3\cdot61$\\
\hspace*{12mm} $61^e\Longrightarrow2\cdot31$\\
\hspace*{16mm} $31^2\Longrightarrow3\cdot331$\\
\hspace*{20mm} $331^q$

Since the components $13^2$ and $31^2$ produce the two copies of 3
needed by the component $3^2$, no other ordinary component can produce a copy of 3.
That is, every other ordinary prime is congruent to $2\pmod{3}$. Thus the only
prime factors of $N$ that are congruent to $1\pmod{3}$ are 13, 31, 61, and 331. 
The results in Table 2 show that there is no solution left to
$\rad\paren{\sigma\paren{x^2}}\mid13\cdot31\cdot61\cdot331$ where $x$ is prime and $x<10^{10}$.
Therefore the only ordinary primes smaller than $10^{10}$ are 3, 13, and 31.
Let us review the components of $N$ to bound the abundancy. $N$ only contains
\begin{itemize}
\item the components $3^2$, $13^2$, $31^2$, $61^e$, $331^q$, 
\item at most 2 irregular components producing the second needed copy of 13 and of 31,
\item at most 14 regular components with $a+b\le6$,
\item and regular components with $a+b\ge7$.
\end{itemize}
The abundancy of $N$ is thus at most
$$\frac{13}{3^2}\cdot\frac{3\cdot61}{13^2}\cdot\frac{3\cdot331}{31^2}\cdot\frac{61}{60}\cdot\frac{331}{330}\cdot\paren{1+\frac1{10^{10}}}^{2+14}\cdot A_{\infty}$$
which is at most 1.6675275884645719.
% ochem@ochem-laptop:~/opn/JZ1$ calc 13/9*183/169*993/31^2*61/60*331/330*1.0000000001^16*1.0117874770442035944654
%         1.66752758846457182241

\subsection*{Ruling out 19}
First, 19 is not ordinary since $\sigma\paren{19^2}=3\cdot127$ and 127 is ruled out.
Now, we consider the branch leading to $19^q$.\\

$3^2\Longrightarrow 13$\\
\hspace*{8mm} $13^e\Longrightarrow2\cdot7$\\
\hspace*{12mm} $7^2\Longrightarrow3\cdot19$\\
\hspace*{16mm} $19^q$

So the known components of $N$ are $3^2$, $7^2$, $13^e$, and $19^q$.
Since $\omega(N)\ge10$ by Theorem~\ref{Pace}, $N$ also contains at least
six other ordinary components.
Since the component $7^2$ produces the first copy of $3$, at most one other copy of 3
may be produced by an ordinary component. Therefore $N$ contains at most one prime $s>19$ congruent to $1\pmod3$. Thus the set of prime factors of $N$ congruent to $1\pmod3$ is $\acc{7,13,19}\cup\acc{s}$. So  $\operatorname{rad}\paren{\sigma\paren{s^2}}\mid3\cdot7\cdot13\cdot19$.
From the data in Table 2, we deduce that $s>10^{10}$.
Now, for every prime $r>19$ other than $s$, we have
$\operatorname{rad}\paren{\sigma\paren{r^2}}\mid7\cdot13\cdot19\cdot s$.
To get a lower bound on $r$, we consider two cases:
\begin{itemize}
\item If $s\nmid\sigma\paren{r^2}$, then $\operatorname{rad}\paren{\sigma\paren{r^2}}\mid7\cdot13\cdot19$. Thus $r>10^{10}$ by the previous argument.
\item If $s\mid\sigma\paren{r^2}$, then $\sigma\paren{r^2}\ge s$. Thus $r>\sqrt{s}>10^5$.
\end{itemize}
In any case, if $x>19$ is a prime factor of $N$, then $x>10^5$.
Let us review the components of $N$ to bound the abundancy. $N$ only contains
\begin{itemize}
\item the components $3^2$, $7^2$, $13^e$, $19^q$, 
\item at most 4 irregular components producing the second copy of 3, the second copy of 7, and the two copies of $s$,
\item at most 14 regular components with $a+b\le6$,
\item and regular components with $a+b\ge7$.
\end{itemize}
The abundancy of $N$ is thus at most
$$\frac{13}{3^2}\cdot\frac{3\cdot19}{7^2}\cdot\frac{13}{12}\cdot\frac{19}{18}\cdot\paren{1+\frac1{10^5}}^{4+14}\cdot A_{\infty}$$
which is at most 1.944420956068104.

\subsection*{Ruling out primes up to 1000}
The file 3p2.txt rules out every prime not in $\acc{3, 7, 13, 31, 61, 97, 193}$ up to 1000.

\subsection*{Ruling out 31}
First, 31 is not ordinary since $\sigma\paren{31^2}=3\cdot331$ and 331 is ruled out.
Now, we consider the branch leading to $31^q$.\\

$3^2\Longrightarrow 13$\\
\hspace*{8mm} $13^2\Longrightarrow3\cdot61$\\
\hspace*{12mm} $61^e\Longrightarrow2\cdot31$\\
\hspace*{16mm} $31^q$

So the known components of $N$ are $3^2$, $13^2$, $31^q$, and $61^e$.
Since $\omega(N)\ge10$ by Theorem~\ref{Pace}, $N$ also contains at least
six other ordinary components.
Since the component $13^2$ produces the first copy of $3$, at most one other copy of 3
may be produced by an ordinary component. Therefore $N$ contains at most one unknown prime $s\equiv1\pmod3$. Thus the set of prime factors of $N$ congruent to $1\pmod3$ is $\acc{13,31,61}\cup\acc{s}$.
From the previous results, only 7, 97, or 193 can be an unknown prime factor of $N$ smaller than 1000.
\begin{itemize}
\item 7 cannot be a factor since $\sigma\paren{7^2}=3\cdot19$ and 19 is ruled out.
\item 97 cannot be a factor since $\sigma\paren{97^2}=3\cdot3169$
and $\sigma\paren{3169^2}=3\cdot3348577$ would create three copies of 3.
\item 193 cannot be a factor since $\sigma\paren{193^2}=3\cdot7\cdot1783$ and $\sigma\paren{1783^2}=3\cdot829\cdot1279$ would create three copies of 3.
\end{itemize}
% So  $\operatorname{rad}\paren{\sigma\paren{s^2}}\mid3\cdot13\cdot31\cdot61$.
% From the data in Table 2, we deduce that $s>10^{10}$.
% Now, for every prime $r>19$ other than $s$, we have
% $\operatorname{rad}\paren{\sigma\paren{r^2}}\mid7\cdot13\cdot19\cdot s$.
% To get a lower bound on $r$, we consider two cases:
% \begin{itemize}
% \item If $s\nmid\sigma\paren{r^2}$, then $\operatorname{rad}\paren{\sigma\paren{r^2}}\mid7\cdot13\cdot19$. Thus $r>10^{10}$ by the previous argument.
% \item If $s\mid\sigma\paren{r^2}$, then $\sigma\paren{r^2}\ge s$. Thus $r>\sqrt{s}>10^5$.
% \end{itemize}
% In any case, if $x>19$ is a prime factor of $N$, then $x>10^5$.
Let us review the components of $N$ to bound the abundancy. $N$ only contains
\begin{itemize}
\item the components $3^2$, $13^2$, $31^q$, $61^e$, 
\item at most 4 irregular components producing the second copy of 3, the second copy of 13, and the two copies of $s$,
\item at most 14 regular components with $a+b\le6$,
\item and regular components with $a+b\ge7$.
\end{itemize}
The abundancy of $N$ is thus at most
$$\frac{13}{3^2}\cdot\frac{3\cdot61}{13^2}\cdot\frac{31}{30}\cdot\frac{61}{60}\cdot\paren{1+\frac1{10^3}}^{4+14}\cdot A_{\infty}$$
which is at most 1.69272709609736065.
%ochem@ochem-laptop:~/opn/JZ1$ calc 13/9*3*61/169*31/30*61/60*1.001^18*1.0117874770442035944654
%        1.69272709609736064223

\subsection*{Final argument}
At this point, every prime factor $f$ of $N$ is such that $f\in\acc{3,7,13,61,97,193}$ or $f>1000$.
At most two ordinary primes $x_1$ and $x_2$ produce the two copies of 3, so that $x_1$ and $x_2$ are the only ordinary primes congruent to $1\pmod3$.
For every ordinary prime $x$, $\rad\paren{\sigma\paren{x^2}}\mid3\cdot x_1\cdot x_2\cdot p\cdot n$. So $N$ contains at most 8 irregular primes, namely $p$, $n$,
and at most 6 ordinary primes that produce the two copies of 3, $x_1$, and $x_2$.
Every other prime is regular. We know that 3 is ordinary but we do not know the status of 7, 13, 61, 97, and 193. However, we can upper bound the abundancy of $N$ by
$$\frac{13}{3^2}\cdot\frac76\cdot\frac{13}{12}\cdot\frac{61}{60}\cdot\frac{97}{96}\cdot\frac{193}{192}\cdot\paren{1+\frac1{10^3}}^{8+14}\cdot A_{\infty}$$
which is at most 1.9497723588476532.
% ochem@ochem-laptop:~/opn/JZ1$ calc 13/9*7/6*13/12*61/60*97/96*193/192*1.001^22*1.0117874770442035944654
%         1.94977235884765319788

\section{When 3 is notable}
If $N$ is a restrained odd perfect number, then we have shown that $3\mid N$
in Section 2 and that $3^4\mid N$ in Section 3.
Therefore, if $N$ exists, then 3 must be the notable prime.
In this section, we show that the notable component $3^q$ satisfies $q\ge23,000,000,000$. 
First, we need this relation between $q$ and $\omega(N)$.
% \begin{lemma}\label{omegan}
% If $n=3$, then $q\ge\tfrac{\omega(N)}3-1$.
% \end{lemma}

% \begin{proof}
% Let $O_1$ and $O_2$ denote the sets of ordinary primes that are congruent to $1\pmod{3}$ and to $2\pmod{3}$, respectively.
% We set $o_1=|O_1|$ and $o_2=|O_2|$. By Lemma~\ref{rtt}, for every prime $s\in O_2$, either $\sigma(s^2)=p$ or $r \mid \sigma(s^2)$ with $r\in O_1$. By considering the number of copies of primes in $O_1$ produced by primes in $O_2$, we get $2o_1\ge o_2-1$.
% So $\omega(N)=2+o_1+o_2\le2+o_1+(2o_1+1)=3(o_1+1)$.
% Since $3 \mid \sigma(r^2)$ when $r\in O_1$,
% we obtain $q\ge o_1\ge\tfrac{\omega(N)}3-1$.
% \end{proof}

\begin{lemma} If $n=3$, then $q\ge\tfrac{\omega(N)}2-2$. \label{omegan}
\end{lemma}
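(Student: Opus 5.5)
The plan is a counting argument on copies of primes congruent to $1\pmod 3$, refining the cruder count that gives $q\ge\omega(N)/3-1$. Let $O_1$ and $O_2$ be the sets of ordinary primes congruent to $1$ and $2\pmod 3$, with $o_1=|O_1|$ and $o_2=|O_2|$. Since $n=3$, we have $\omega(N)=2+o_1+o_2$, so it suffices to show $o_1+o_2\le 2q+2$.

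\textbf{Step 1: $o_1\le q$.} For $r=1+3k\in O_1$ we have $\sigma(r^2)=3(1+3k+3k^2)$, so $3\parallel\sigma(r^2)$: each element of $O_1$ produces exactly one copy of $3$. The total exponent of $3$ in $N$ is $q$, hence $o_1\le q$.

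\textbf{Step 2: where the other prime factors go.} Every prime factor of $x^2+x+1$ other than $3$ is $\equiv1\pmod 3$. Also $p\equiv1\pmod3$ by Lemma~\ref{p1mod3}. So for $s\in O_2$, $\sigma(s^2)$ is prime to $3$ and $\rad(\sigma(s^2))\mid p\cdot\prod_{r\in O_1}r$. For $r\in O_1$, every prime factor of $\sigma(r^2)/3$ also lies in $O_1\cup\{p\}$.

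\textbf{Step 3: a supply-and-demand count.} The supply of copies of primes in $O_1$ is exactly $2o_1$. On the demand side:
\begin{itemize}
\item every $s\in O_2$ with $\sigma(s^2)$ not a power of $p$ consumes at least one copy, and by Lemma~\ref{rtt} at most one $s\in O_2$ has $\sigma(s^2)=p$;
\item every $r\in O_1$ with $\sigma(r^2)\ne 3p^a$ consumes at least one more copy.
\end{itemize}
Let $c$ be the number of copies supplied by $\sigma(3^q)$ and $\sigma(p^e)$, and $E$ the number of exceptional $r$ with $\sigma(r^2)=3p^a$. Then
\[
2o_1 \ge (o_2-1)+(o_1-E)-c,
\]
that is, $o_2\le o_1+1+E+c$. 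Combined with Step 1 this gives $o_1+o_2\le 2o_1+1+E+c\le 2q+1+E+c$.

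\textbf{Main obstacle: controlling $E+c$.} The target requires $E+c\le1$, and I expect this to be the hard part.
\begin{itemize}
\item For $E$, I would show that $r^2+r+1=3p^a$ has at most one solution $r$ once $p$ is fixed. The approach is to reduce to a Nagell--Ljunggren / Bennett--Levin type statement, the analogue of Lemma~\ref{rtt} with the factor $3$.
\item For $c$, copies of $O_1$-primes coming from the notable and special components cannot be bounded a priori. So I would instead trade them against copies of $3$. Each copy of $3$ beyond the $o_1$ produced by $O_1$ must come from $\sigma(p^e)$ (since $3\nmid\sigma(s^2)$ for $s\in O_2$), which sharpens Step 1 to $q\ge o_1+v_3(\sigma(p^e))$.
\end{itemize}
If that trade does not close the gap, the fallback is to absorb the leftover slack into the additive constant. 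I would first check whether the argument really yields $-2$, i.e. whether at most one $O_1$-copy can be attributed to the special and notable components without being compensated elsewhere. This is exactly the step where the precise constant is decided.
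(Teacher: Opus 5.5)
Your setup (Steps 1 and 2, and counting copies of $O_1$-primes) matches the paper's. The proposal still does not reach the stated constant, for two reasons.

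First, $c$ enters with the wrong sign. Each $r\in O_1$ occurs exactly twice in $N$. Since $\sigma(N)=2N$, every copy of $r$ appearing in $\sigma$ of any component uses up one of those two. Copies of $O_1$-primes inside $\sigma(3^q)$ or $\sigma(p^e)$ therefore \emph{reduce} what is left for the ordinary components; they do not add to it. The correct inequality is $2o_1\ge(o_2-1)+(o_1-E)+c$, and $c\ge0$ can simply be dropped. The ``obstacle'' of bounding $c$, and the trade against copies of $3$, are not needed.

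Second, and this is the real gap, after that correction you still need at most $2$ exceptional ordinary primes in total, i.e.\ those $x$ with $\sigma(x^2)\in\{p^a,3p^a\}$. Your route via $E\le1$ is unsupported. Results of Nagell--Ljunggren or Bennett--Levin type only bound the exponent. Theorem~2 of Bugeaud and Mihailescu, used in the paper, shows that $x^2+x+1=3y^t$ has no solution with $t\ge3$. That still allows both $\sigma(r^2)=3p$ and $\sigma(r'^2)=3p^2$, so it gives only $E\le2$. Excluding both at once is a separate Diophantine problem, and you give no argument for it.

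The paper sidesteps this by pooling the three possible exceptions $\sigma(x^2)\in\{p,3p,3p^2\}$ across $O_1\cup O_2$. It then invokes Lemma~3 of Ochem and Rao, which says $\sigma(s^2)=p$ and $\sigma(r^2)=3p$ cannot both occur. Hence at most two ordinary primes are exceptional in total, even when $E=2$. This gives $2o_1\ge o_1+o_2-2$ and so $q\ge o_1\ge\omega(N)/2-2$.

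With your separate accounting (at most $1$ exception in $O_2$ via Lemma~\ref{rtt}, and $E\le2$ in $O_1$), you only get $o_2\le o_1+3$, i.e.\ $q\ge\omega(N)/2-5/2$. Your fallback of absorbing the slack into the constant would therefore prove a weaker lemma than the one stated.
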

\begin{proof}
Let $O_1$ and $O_2$ denote the sets of ordinary primes that are congruent to $1\pmod{3}$ and to $2\pmod{3}$, respectively.
We set $o_1=|O_1|$ and $o_2=|O_2|$. By Lemma~\ref{rtt}, for every prime $s\in O_2$, either $\sigma(s^2)=p$ or $r\mid\sigma(s^2)$ with $r\in O_1$.
By Theorem 2 in~\cite{BugeaudMihailescu:2007}, the equation $x^2+x+1=3y^t$ has no integer solution for $t\ge3$, and thus for any prime $s\in O_1$, except at most two of them (one where $\sigma(s^2)=3p$ and one where $\sigma(s^2)=3p^2$), we have $r\mid\sigma(s^2)$.
Therefore, for every ordinary prime $s$, we have $r\mid\sigma(s^2)$ except maybe in the three cases where $\sigma(s^2)\in\acc{p,3p,3p^2}$.
Now, by Lemma 3 in~\cite{OchemRaoOmega}, we cannot have the cases $p$ and $3p$ simultaneously.
So we have $r\mid\sigma(s^2)$ except in at most two exceptional cases.
This gives $2o_1\ge o_2+o_1-2$, which means that $o_2\le o_1+2$.
So $\omega(N)=2+o_1+o_2\le2+o_1+(o_1+2)=2(o_1+2)$.
Since $3\mid\sigma(r^2)$ when $r\in O_1$, we obtain $q\ge o_1\ge\tfrac{\omega(N)}2-2$.
\end{proof}

Now we rule out 5 and 7.\\

$5^e$\ \ \ Lemma~\ref{p1mod3}\\
\hspace*{4mm} $5^2\Longrightarrow31$\\
\hspace*{8mm} $31^2\Longrightarrow3\cdot331$\\
\hspace*{12mm} $331^2\Longrightarrow3\cdot7\cdot5233\ \ \ \sigma_{-1}\paren{3^2\cdot5^2\cdot7^2}>2$\\

$7^2\Longrightarrow3\cdot19$\\
\hspace*{8mm} $19^2\Longrightarrow3\cdot127$\\
\hspace*{12mm} $127^2\Longrightarrow3\cdot5419$\\
\hspace*{16mm} $5419^2\Longrightarrow3\cdot31\cdot313\cdot1009$\\
\hspace*{20mm} $313^e\Longrightarrow2\cdot157$\\
\hspace*{24mm} $157^2\Longrightarrow3\cdot8269$\\
\hspace*{28mm} $8269^2\Longrightarrow3\cdot7\cdot3256411$\\
\hspace*{32mm} $3256411^2\Longrightarrow3\cdot7^2\cdot733\cdot1543\cdot63781$\ \ \ XS 7\\
\hspace*{20mm} $313^2\Longrightarrow3\cdot181^2$\\
\hspace*{24mm} $181^e\Longrightarrow2\cdot7\cdot13\ \ \ \sigma_{-1}\paren{3^4\cdot7^2\cdot13^2\cdot19^2\cdot31^2}>2$\\
\hspace*{24mm} $181^2\Longrightarrow3\cdot79\cdot139$\\
\hspace*{28mm} $79^2\Longrightarrow3\cdot7^2\cdot43$\\
\hspace*{32mm} $\sigma_{-1}\paren{3^6\cdot7^2\cdot19^2\cdot31^2\cdot43^2\cdot79^2\cdot127^2\cdot139^2}>2$\\

% 7^2 => 3 19
%  19^2 => 3 127
%   127^2 => 3 5419
%    5419^2 => 3 31 313 1009
%     313^e => 2 157
%      157^2 => 8269
%       8269^2 => 3 7 3256411
%        3256411^2 => 3 7^2 733 1543 63781    XS 7
%     313^2 => 3 181^2
%      181^e => 2 7 13    a(3^4*7^2*13^2*19^2*31^2) = 2.05204625901750125902117182648
%      181^2 => 3 79 139
%       79^2 => 3 7^2 43    a(3^6*7^2*19^2*31^2*43^2*79^2*127^2*139^2) = 2.00229872301619017614388428651

Then in the file tree.txt, we rule out every prime from 11 up to $3,000,000$.
This file has been generated by the program tree.cpp using a few helpful factorizations in checkfacts.txt.
The program does not test for the contradictions XS and $\sigma_{-1}>2$ since they are not necessary for this computation.

Finally, we use the C++/GMP program loopless.cpp running the algorithm below to get a lower bound on $\omega(N)$. Let $F(k)$ denote the set of prime factors of $k$ distinct from 2 and 3. We maintain a set $M_1$ of primes that cannot be ordinary and a set $M_2$ of primes congruent to $1\pmod3$ that cannot divide $N$.
\begin{algorithm}
\begin{algorithmic}[1]
\caption{Sieving ordinary primes for $n=3$}
\State Initialize $M_1 \leftarrow \{ r\text{ is prime and } 5 \le r < 3,000,000 \}$
\State Initialize $M_2 \leftarrow \{ r\text{ is prime, }  7 \le r < 3,000,000 \text{, and } r \equiv 1 \pmod 3 \}$
\State Initialize $A \leftarrow 1.0$ \Comment{Abundancy tracker for surviving ordinary primes}
\State Initialize $k \leftarrow 0$ \Comment{Counting surviving ordinary primes}
\For{each prime $r > 3,000,000$ in increasing order}
    \If{$F(\sigma(r^2))\cap M_2 == \emptyset$} \Comment{$r$ can be ordinary}
        \State $A\leftarrow A\cdot\frac{r}{r-1}$
        \State $k\leftarrow k+1$
    \Else
        \State $M_1\leftarrow M_1\cup\{r\}$ \Comment{$r$ is not ordinary, so $r\nmid N$ or $r$ is special}
        \If{$r\equiv1\pmod3\text{ and }(r\equiv3\pmod4\text{ or }F(\sigma(r))\cap M_1\ne\emptyset)$}
                \State $M_2\leftarrow M_2\cup \{r\}$ \Comment{$r$ is not special either, so $r\nmid N$}
        \EndIf
    \EndIf
\EndFor
\end{algorithmic}
\end{algorithm}
We let the program run for 3 weeks until it said to need more than $2,500,000,000$ ordinary primes
up to $z=216,171,986,291$ only to reach abundancy $1.088507187872$.
Suppose that $N$ contains only $43,500,000,000$ prime factors greater than $z$.
Their abundancy is at most $(1+z^{-1})^{43,500,000,000}$.
Since primes smaller than $3,000,000$ are ruled out, we get $p>6,000,000$,
so that the abundancy of the special component is $\sigma_{-1}(p^e)<1+\tfrac1{6,000,000}$.
Moreover, the abundancy of the notable component is $\sigma_{-1}(3^q)<\tfrac32$.
The product of these abundancies does not reach 2:
$$\tfrac32\cdot\paren{1+\tfrac1{6,000,000}}\cdot1.076855990994\cdot(1+z^{-1})^{43,500,000,000}\approx1.99209763359616948564$$
Therefore, $N$ contains more than $2,500,000,000+43,500,000,000=46,000,000,000$ ordinary primes and Lemma~\ref{omegan} implies that $q\ge23,000,000,000$.

\section{Concluding remarks}
A better understanding of numbers of the form $\sigma(x^2)$ where $x$ is prime would make life easier.

\begin{question} Does there exist a prime $x$ such that $x^2+x+1$ is square-full?\label{sqfull}
\end{question}
Recall that $n$ is square-full if and only if $\paren{\rad(n)}^2\mid n$.
A negative answer to Question~\ref{sqfull} would reduce the possible pairs $(a,b)$
to $(0,1)$, $(1,0)$, $(1,1)$, $(1,b)$ with $b\ge2$, and $(a,1)$ with $a\ge2$.

\begin{question} Do there exist primes $x$, $y$, $p$ and $r$ and integers $a>b>0$ such that $x^2+x+1 = p^a r$ and $y^2+y+1=p^b r$?\label{thing that would make life easier}
\end{question}

If the answer to~\ref{thing that would make life easier} is ``no,'' then we could substantially tighten Lemma \ref{omegan}. The difficulty in part stems from there being multiple almost examples. For example, $x = 30$, $y = 11$, $p=7$ and $r=19$ yields a solution with the exception that $30$ is obviously not prime. Similarly, we may take $x=462$, $y=9353$, $p= 409$ and $r=523$. Note that in this case, both $x$ and $y$ are composite, since  $9353=(47)(199)$.

This paper shows that the existence of a restrained OPN can be reduced to the case
when $3$ is notable.
Our bound on $q$ can certainly gain orders of magnitude using better arguments from sieve theory.
However, it seems that new ideas are required to completely rule out that $3$ is notable, that is, to show that every OPN has at least two even exponents greater than 2.

\end{document}